\DeclareMathAlphabet{\mathpzc}{OT1}{pzc}{m}{it}
\numberwithin{equation}{section}
\begin{document}

\theoremstyle{plain}
\newtheorem{theorem}{Theorem}[section]
\newtheorem{lemma}[theorem]{Lemma}
\newtheorem{corollary}[theorem]{Corollary}
\newtheorem{definition}[theorem]{Definition}
\theoremstyle{definition}
\newtheorem{remark}[theorem]{Remark}

\crefname{lemma}{lemma}{lemmas}
\Crefname{lemma}{Lemma}{Lemmata}
\crefname{corollary}{corollary}{corollaries}
\Crefname{corollary}{Corollary}{Corollaries}

\newcommand{\cadlag}{c\`adl\`ag }
\newcommand{\1}{\mathbb{I}}
\newcommand{\X}{\mathsf{X}} 
\newcommand{\D}{{\mathbb{D}}}
\newcommand{\Df}{\mathbf{D}}
\renewcommand{\epsilon}{\varepsilon}
\newcommand{\bcdot}{\boldsymbol{\cdot}}
\renewcommand{\u}{\sigma}
\newcommand{\w}{\mu} 
\newcommand{\B}{\mathbb{B}}
\newcommand{\h}{h^*}

\title[Dual Yamada--Watanabe Theorem for L\'evy driven SDEs]{A Dual Yamada--Watanabe Theorem for L\'evy driven stochastic differential equations} 
\author[D. Criens]{David Criens}
\address{D. Criens - University of Freiburg, Ernst-Zermelo-Str. 1, 79104 Freiburg, Germany}
\email{david.criens@stochastik.uni-freiburg.de}

\keywords{Yamada--Watanabe Theorem; L\'evy Process; Stochastic Differential Equation; Joint Uniqueness; Weak Uniqueness; Weak Existence; Strong Uniqueness; Strong Existence; Martingale Problem\vspace{1ex}}

\subjclass[2010]{60H10, 60G51, 60H05}

\thanks{The author thanks the anonymous referee for many helpful comments.}
\thanks{Financial support from the DFG project No. SCHM 2160/15-1 is gratefully acknowledged.}

\date{\today}
\maketitle

\frenchspacing
\pagestyle{myheadings}

\begin{abstract}
We prove a dual Yamada--Watanabe theorem for one-dimensional stochastic differential equations driven by quasi-left continuous semimartingales with independent increments. In particular, our result covers stochastic differential equations driven by (time-inhomogeneous) L\'evy processes. More precisely, we prove that weak uniqueness, i.e. uniqueness in law, implies weak  joint uniqueness, i.e. joint uniqueness in law for the solution process and its driver.
\end{abstract}

\section{Introduction}
The classical Yamada--Watanabe theorem \cite{YW} for Brownian stochastic differential equations (SDEs) tells us that strong uniqueness (i.e. pathwise uniqueness) and weak existence implies weak uniqueness (i.e. uniqueness in law) and strong existence. Jacod \cite{J80} lifted this result to SDEs driven by semimartingales and extended it by showing that strong uniqueness and weak existence is equivalent to weak joint uniqueness and strong existence. For Brownian SDEs, results related to Jacod's theorem have also been shown by Engelbert \cite{doi:10.1080/17442509108833718}.

For a moment consider the SDE
\[
d X_t = \u_t (X) d L_t, \quad L = \text{L\'evy process},
\]
with non-degenerate coefficient \(\u\), i.e. \(\u \not = 0\). Since in this case
\begin{align}\label{eq: intro}
d L_t = \frac{\u_t (X)d L_t}{\u_t (X)} = \frac{d X_t}{\u_t (X)},
\end{align}
the noise \(L\) can be recovered from the solution \(X\) and consequently, weak uniqueness implies weak joint uniqueness. By Jacod's theorem, this observation shows that weak uniqueness and strong existence implies strong uniqueness and weak existence, which can be seen as a \emph{dual Yamada--Watanabe theorem}. It is a natural and interesting question whether the dual theorem also holds for SDEs with possibly degenerate coefficients, i.e. in case \(\u= 0\) is allowed.
Cherny \cite{doi:10.1137/S0040585X97979093} answered this question 
affirmatively for Brownian SDEs by proving that weak uniqueness implies weak joint uniqueness.

More recently, the dual theorem has been generalized to various infinite-dimensional Brownian frameworks, see \cite{doi:10.1142/S0219493710002991, rem20, tappe20}.  
A version for SDEs with L\'evy drivers seems to be missing in the literature. To the best of our knowledge, the only formulation of a dual theorem for SDEs with discontinuous noise is the main result of \cite{DYWTPP20}, which is a version for SDEs driven by time-inhomogeneous Poisson processes. However, the proof in  \cite{DYWTPP20} has a gap.\footnote{The first sentence in the proof of \cite[Lemma 3.3]{DYWTPP20} is the following: "By Lemma 2.3, we only need to prove  that two Poisson processes are independent if and only if their sum is also a Poisson process." The if implication, which is supposed to prove a direction of \cite[Lemma 3.3]{DYWTPP20} used in the proof of the main result \cite[Theorem 3.1]{DYWTPP20}, is not true. Indeed, Jacod \cite{10.2307/3212423} gave an example of two dependent Poisson processes whose sum is also a Poisson process. The gap in the proof of \cite[Lemma 3.3]{DYWTPP20} is that the factorization \(E[e^{i u (X + Y)}] = E[e^{iu X}] E[e^{iu Y}]\) does not suffice to conclude independence of \(X\) and \(Y\). \label{fn: incomplet}}

The purpose of this short paper is to close the gap in the literature and to prove a dual theorem for SDEs driven by quasi-left continuous semimartingales with independent increments (SIIs), which is a large class of drivers including in particular all L\'evy processes.  We now directly formulate our main result, where we refer to the next section for precise definitions. Let \(\w\) and \(\u\) be real-valued predictable processes on the path space of \cadlag functions \(\mathbb{R}_+ \to \mathbb{R}\) and let \(L\) be an SII, which we parameterize below by its deterministic semimartingale characteristics.
\begin{theorem}\label{theo: main1intro}
	For the SDE 
	\begin{align}\label{eq: mainSDE}
	d X_t = \w_t (X) dt + \u_t (X) d L_t, \qquad X_0 = x_0 \in \mathbb{R},
	\end{align}
	the following are equivalent:
	\begin{enumerate}
		\item[\textup{(i)}] Strong uniqueness and weak existence holds.
		\item[\textup{(ii)}] Weak uniqueness and strong existence holds.
		\item[\textup{(iii)}] Weak joint uniqueness and strong existence holds. 
	\end{enumerate}
\end{theorem}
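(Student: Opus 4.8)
The plan is to close the cycle (i) $\Rightarrow$ (ii) $\Rightarrow$ (iii) $\Rightarrow$ (i), of which the genuinely new content is the implication (ii) $\Rightarrow$ (iii); the other two are essentially Jacod's generalized Yamada--Watanabe theorem applied to the SII-driven SDE \eqref{eq: mainSDE}. More precisely, for (iii) $\Rightarrow$ (i): weak joint uniqueness gives in particular weak uniqueness for the pair $(X,L)$, hence for $X$, so by Jacod's theorem (the direction: weak joint uniqueness $+$ strong existence $\Rightarrow$ strong uniqueness $+$ weak existence) we obtain (i). For (i) $\Rightarrow$ (ii): strong uniqueness $+$ weak existence implies, again by Jacod, weak joint uniqueness $+$ strong existence, and weak joint uniqueness trivially entails weak uniqueness, giving (ii). So everything reduces to the heart of the matter: assuming weak uniqueness (and strong existence, though one really only needs weak existence here since the Jacod-type machinery upgrades it), prove weak joint uniqueness in law for $(X,L)$.

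For the key implication I would follow Cherny's strategy in the Brownian case, adapted to the jump setting. Take two weak solutions $(X,L)$ on $(\Omega,\mathcal F,\mathbb F,\mathbb P)$ and $(X',L')$ on $(\Omega',\mathcal F',\mathbb F',\mathbb P')$; by weak uniqueness $X \stackrel{d}{=} X'$. The goal is $(X,L)\stackrel{d}{=}(X',L')$. The decisive structural fact is that $L$ is an SII with \emph{deterministic} characteristics $(B,C,\nu)$, independent of $X_0=x_0$; so the law of $(X,L)$ is determined by the conditional law of $L$ given $X$, and I want to show this conditional law is itself deterministic — i.e.\ that $L$ is, conditionally on $X$, still an SII with the \emph{same} characteristics $(B,C,\nu)$. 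The route is via the martingale-problem / characteristics formulation introduced in the preliminaries: from $dX_t = \mu_t(X)\,dt + \sigma_t(X)\,dL_t$ one reads off the characteristics of the semimartingale $X$ (a time-change / push-forward of $(B,C,\nu)$ through $\sigma$, plus the drift $\mu$), and, crucially, one also reads off the \emph{joint} characteristics of the $\mathbb R^2$-valued semimartingale $(X,L)$: its drift is $(\mu_t(X)+\sigma_t(X)\dot B_t,\ \dot B_t)$, its diffusion matrix is $\dot C_t\binom{\sigma_t(X)}{1}(\sigma_t(X)\ 1)$, and its jump measure is the push-forward of $\nu$ under $z\mapsto(\sigma_t(X)z,\ z)$. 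All of these are measurable functionals of the path of $X$ alone (since $\mu,\sigma$ are predictable functionals of $X$ and $B,C,\nu$ are deterministic). Hence the pair $(X,L)$ solves a well-posed martingale problem whose characteristics depend only on $X$, and the same holds for $(X',L')$ with the identical characteristics; since $X\stackrel d= X'$ and these characteristics are the same measurable functional of the path, the two martingale problems have the same ``input'', so by a measurable-selection / regular-conditional-distribution argument (exactly the tool used to prove uniqueness of martingale problems propagates to joint laws) one concludes $(X,L)\stackrel d=(X',L')$.

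The technical implementation I envisage: (1) show that any weak solution of \eqref{eq: mainSDE} makes $(X,L)$ an Itô semimartingale relative to the (deterministic, hence path-independent) ``reference'' structure, with characteristics the explicit functionals above — this is a direct computation with stochastic integrals against the SII $L$ and its canonical decomposition $L = B + L^c + \int z\,(\mu^L-\nu)$; (2) invoke the equivalence, established in the preliminary section, between being such a weak solution and solving the associated martingale problem for the pair, so that weak uniqueness for the $X$-marginal plus the deterministic dependence of the pair-characteristics on $X$ yields uniqueness in law for the pair via a conditioning argument — concretely, disintegrate $\mathbb P$ over the law of $X$ and check that the conditional law of $L$ given $X=w$ is, for $\mathrm{Law}(X)$-a.e.\ $w$, the unique law under which the canonical process is an SII with characteristics $(B,C,\nu)$ and $X = w$ ``drives'' it through $\sigma(w)$; uniqueness of that conditional law is where one re-uses weak well-posedness of SII/additive-process martingale problems.

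The main obstacle I expect is precisely the point highlighted in footnote~\ref{fn: incomplet}: in the jump case one cannot identify a law by characteristic-function factorization or by matching one-dimensional marginals — dependent Poisson processes can sum to a Poisson process. So the proof must go through the \emph{full} (joint, over all of $[0,\infty)$) characteristics of the pair $(X,L)$, not through marginal laws or integrated quantities, and one must be careful that the functional $w\mapsto(\text{characteristics of the pair})$ is genuinely predictable/measurable in $w$ and that uniqueness of the martingale problem for additive processes with given deterministic characteristics is available in the generality needed (quasi-left-continuous SIIs). Secondary technical care is needed on: the non-degeneracy-free handling (when $\sigma_t(X)=0$ on a set, $L$ is \emph{not} recoverable from $X$ pathwise, which is exactly why one needs the conditional-law argument rather than the naive division \eqref{eq: intro}); measurability of the disintegration; and ensuring the drift/time-inhomogeneity in $B$ does not spoil the functional-dependence-on-$X$ structure. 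Once the joint-characteristics computation and the measurable-selection lemma for SII martingale problems are in place, assembling (ii) $\Rightarrow$ (iii) and then closing the cycle through Jacod's theorem is routine.
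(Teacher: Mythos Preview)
Your reduction to ``weak uniqueness $\Rightarrow$ weak joint uniqueness'' via Jacod's theorem is correct and matches the paper. The gap is in your proof of that implication. You correctly compute that the joint characteristics of $(X,L)$ are predictable functionals of $X$ alone, but your next step --- disintegrating over $X$ and asserting that the conditional law of $L$ given $X=w$ is the unique law of an SII with characteristics $(B,C,\nu)$ subject to $\int\sigma(w)\,dL = w - x_0 - \int\mu(w)\,ds$ --- is unjustified at both ends. First, you have not shown that under the regular conditional distribution $P(\,\cdot\mid X=w)$ the process $L$ is still an SII; conditioning on a path functional of $L$ can destroy the independent-increments property, and this is precisely the kind of subtlety flagged in the footnote you cite. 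Second, even granting that, the constrained problem ``SII with a prescribed stochastic integral against $\sigma(w)$'' is not a standard martingale problem whose well-posedness you can quote. The observation that the pair-characteristics depend only on $X$ is necessary but not sufficient: well-posedness of the $X$-martingale problem does not formally lift to well-posedness of the $(X,L)$-martingale problem, and proving that lift \emph{is} weak joint uniqueness. Your appeal to ``measurable selection'' and ``well-posedness of SII martingale problems'' does not close this circle.

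The paper avoids conditional laws altogether. On a product extension it introduces an independent copy $U$ of the driver and sets $dV_t = \1_{\{\sigma_t(X)\neq 0\}}\,dU_t + \1_{\{\sigma_t(X)=0\}}\,dL_t$, so that $L$ is recoverable from $(X,V)$ via $dL_t = \sigma_t(X)^{-1}\1_{\{\sigma_t(X)\neq 0\}}(dX_t - \mu_t(X)\,dt) + \1_{\{\sigma_t(X)=0\}}\,dV_t$. It then proves that $V$ is an SII with the same characteristics and, crucially, that $V$ is independent of $X$. Independence is obtained not by disintegration but by showing that the quadratic covariation $[M^f,K^g]$ between the martingale-problem test processes for $V$ and for $X$ vanishes (using that $U$ and $L$ have orthogonal continuous parts and never jump together), followed by a change-of-measure argument in the spirit of Ethier--Kurtz: tilting $P$ by the $V$-test-martingale $M^f$ leaves the $X$-martingale problem intact, so weak uniqueness forces the $X$-law to be unchanged under the tilt, and reversing roles yields independence. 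This concrete construction of $V$ together with the covariation-plus-change-of-measure argument is the missing idea in your sketch.
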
 

We shortly comment on the proof of \Cref{theo: main1intro}. The implication (i) \(\Rightarrow\) (ii) is the classical Yamada--Watanabe theorem and the equivalence of (i) and (iii) is Jacod's theorem. Thus, it suffices to prove (ii) \(\Rightarrow\) (iii). More precisely, we prove that weak uniqueness implies weak joint uniqueness, see \Cref{prop: main1} below. Similar to Cherny's proof for Brownian SDEs, the main idea is to recover the driver \(L\) from the solution process \(X\) and an independent SII \(V\). The technical core of the argument is the proof for independence of \(X\) and \(V\). We first show that the laws of \(X\) and \(V\) can be characterized via certain martingale problems and that the covariation between the corresponding test martingales vanishes. This observation allows us to deduce independence from the weak uniqueness assumption with a change of measure argument, which is inspired by ideas from \cite{EK}. 
Our proof for the independence of \(X\) and \(V\) is different from Cherny's argument, which uses a second auxiliary process. The proofs of the dual theorems in the recent works \cite{doi:10.1142/S0219493710002991, rem20,DYWTPP20} are based on Cherny's argument, while in different settings, and the proof in \cite{tappe20} uses results from \cite{rem20} and the so-called method of the moving frame.

Let us also mention an interesting follow up question: It is well-known that the Yamada--Watanabe theorem remains true for SDEs driven by a Poisson random measure, see \cite[Theorem 14.94]{J79}. Thus, it is only natural to ask whether the dual theorem also holds for such SDEs. We leave this question for future research.

Finally, we end the introduction with a short comment on our notation: As far as possible, we use the notation developed in the monograph \cite{JS} by Jacod and Shiryaev. In particular, we use the Strasbourg notation for (stochastic) integrals, i.e. we write \(H \bcdot X\) for the stochastic integral of \(H\) with respect to \(X\). Furthermore, we stress that throughout the paper equalities hold up to a null set.

The article is structured as follows: In \Cref{sec: 2} we provide definitions and the statement that weak uniqueness implies weak joint uniqueness. The proof of this claim is given in \Cref{sec: 3}.

\section{The Setting and Main Results}\label{sec: 2}
In this section we introduce our setting in a precise manner. 
Let us start with a description of the random driver for the SDE \eqref{eq: mainSDE}. 
A real-valued \cadlag adapted process with initial value zero is called \emph{SII}, if it is a quasi-left continuous semimartingale whose semimartingale characteristics \((B, C, \nu)\) (relative to a fixed  truncation function \(h \colon \mathbb{R} \to \mathbb{R}\)) are deterministic. It is well-known (\cite[Proposition II.2.9]{JS}) that the semimartingale characteristics \((B, C, \nu)\) of an SII admit a decomposition
\[
dB_t = b_t dA_t, \quad dC_t = c_t dA_t, \quad \nu(dx, dt) = F_t (dx) d A_t,
\]
for an increasing continuous function \(A \colon \mathbb{R}_+ \to \mathbb{R}_+\) starting in the origin, \(d A_t\)-integrable Borel functions \(b \colon \mathbb{R}_+ \to \mathbb{R}\) and \(c \colon \mathbb{R}_+ \to \mathbb{R}_+\), and a Borel transition kernel \(F\) from \(\mathbb{R}_+\) into \(\mathbb{R}\) with \(F_t (\{0\}) = 0\) and \(\int_0^\cdot \int(1 \wedge |y|^2) F_s (dy) d A_s < \infty\).
We call such a quadruple \((b, c, F; A)\) \emph{local characteristics} of the SII. Clearly, the local characteristics are not unique. Nevertheless, it is well-known (\cite[Theorem II.4.25]{JS}) that the law of an SII is fully characterized by its semimartingale characteristics and therefore also by its local characteristics. 
The class of SIIs can equivalently be defined as the class of quasi-left continuous semimartingales with independent increments, see \cite[Theorem II.4.15]{JS}. This characterization explains the abbreviation \emph{SII} which stands for \emph{semimartingale with independent increments}.
It is clear that a L\'evy process with (time-independent) L\'evy--Khinchine triplet \((b, c, F)\) is an SII  and (a version of) its local characteristics is given by \((b, c,  F; A_t = t)\).

Next, we introduce the coefficients of the SDE \eqref{eq: mainSDE}.
Let \(\mathbb{D}\) be the Skorokhod space, i.e. the space of \cadlag functions \(\mathbb{R}_+ \to \mathbb{R}\). We denote the coordinate process by \(\X\), i.e. \(\X(\omega) =\omega\) for all \(\omega \in \D\). Further, on \(\D\) we define \(\sigma\)-field \(\mathcal{D} \triangleq \sigma (\X_t, t \in \mathbb{R}_+)\) and the filtration \(\Df \triangleq (\mathcal{D}_t)_{t \geq 0}\), where \(\mathcal{D}_t \triangleq \bigcap_{s > t} \sigma (\X_r, r \in [0, s])\). For a probability measure \(P\) on \((\D, \mathcal{D})\), \(\Df^P\) denotes the \(P\)-completion of the filtration \(\Df\).

Throughout the paper, we fix two real-valued \(\Df\)-predictable processes \(\w\) and \(\u\) on \(\D\), an initial value \(x_0 \in \mathbb{R}\) and local characteristics \((b, c, F; A)\).

We are in the position to define existence and uniqueness concepts for the SDE \eqref{eq: mainSDE}. Hereby, we adapt terminology from \cite{doi:10.1137/S0040585X97979093,J80}. 
\begin{definition}\begin{enumerate}
		\item[\textup{(i)}] We call \((\B, L)\) a \emph{driving system}, if \(\B = (\Omega, \mathcal{F}, (\mathcal{F}_t)_{t \geq 0}, P)\) is a filtered probability space with right-continuous and complete filtration which supports an SII \(L\) with local characteristics \((b, c, F; A)\). 
		\item[\textup{(ii)}] Let \((\B, L)\) be a driving system. We call a \cadlag adapted process \(X\) on \(\B\) a \emph{solution process} to the SDE \eqref{eq: mainSDE}, if 
		\[
		X = x_0 + \int_0^\cdot \w_s (X) ds + \u (X) \bcdot L, 
		\]
		where it is implicit that the integrals are well-defined, i.e. a.s. \(Z_t \circ X < \infty\) for all~\(t \in \mathbb{R}_+\), where
		\begin{equation}\label{eq: int well-defined} \begin{split}
		Z \triangleq \int_0^\cdot |\w_s| ds &+ \int_0^\cdot \Big( \u_s^2 c_s + \int_{\{|x| \leq 1\}}(1 \wedge |\u_s x|^2) F_s (dx) \Big) d A_s 
		\\&+ \int_0^\cdot \Big| \u_s b_s + \int \big( \u_s x\hspace{0.04cm} \1_{\{|x| \leq 1, |\u_s x| \leq 1\}} - \u_s h(x) \big) F_s (dx) \Big| d A_s,
		\end{split}
		\end{equation}
		see \cite[Theorem III.6.30]{JS}.
		The solution process \(X\) is called a \emph{strong solution process}, if it is adapted to the completed natural filtration of \(L\).
		\item[\textup{(iii)}]
		We say that \emph{weak (strong) existence} holds for the SDE \eqref{eq: mainSDE}, if there exists a driving system which supports a (strong) solution process.
		\item[\textup{(iv)}]
		We say that \emph{strong uniqueness} holds for the SDE \eqref{eq: mainSDE}, if on any driving system there exists up to indistinguishability at most one solution process.
		\item[\textup{(v)}]
		A probability measure \(Q\) on \((\D, \mathcal{D})\) (resp. on \((\D \times \D, \mathcal{D} \otimes \mathcal{D})\)) is called a \emph{(joint) solution measure} for the SDE \eqref{eq: mainSDE}, if there exists a driving system \((\B, L)\) which supports a solution process \(X\) such that \(Q\) is the law of \(X\) (resp. the law of \((X, L)\)).
		\item[\textup{(vi)}]
		We say that \emph{weak (joint) uniqueness} holds for the SDE \eqref{eq: mainSDE}, if there exists at most one (joint) solution measure.
	\end{enumerate}
\end{definition}

Since laws of SIIs are completely characterized by their local characteristics, see \cite[Theorem II.4.25]{JS}, the following theorem follows from \cite[Corollary 8.4]{J80}.
\begin{theorem}\label{prop: jacod}
	For the SDE \eqref{eq: mainSDE} the following are equivalent:
	\begin{enumerate}
		\item[\textup{(i)}] Strong uniqueness and weak existence holds.
		\item[\textup{(ii)}] Weak joint uniqueness and strong existence holds. 
	\end{enumerate}
\end{theorem}
In other words, the equivalence of (i) and (iii) in \Cref{theo: main1intro} holds. 
In the next section we will prove the following:
\begin{theorem}\label{prop: main1}
	For the SDE \eqref{eq: mainSDE} weak uniqueness implies weak joint uniqueness.
\end{theorem}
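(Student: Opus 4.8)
The plan is to follow the reconstruction strategy sketched in the introduction. Let $(\B, L)$ be a driving system carrying a solution process $X$. Passing to a product extension of $\B$ — which changes neither the law of $(X,L)$ nor the SDE it solves — I may assume that $\B$ also supports an SII $W$ with local characteristics $(b,c,F;A)$ that is independent of $\sigma(X, L)$. With $G \triangleq \{\u(X) = 0\}$, a predictable subset of $\mathbb{R}_+ \times \Omega$, I set
\[
V \triangleq \1_G \bcdot L + \1_{G^c} \bcdot W .
\]
As $\1_G$ and $\1_{G^c}$ are $\{0,1\}$-valued, $G$ and $G^c$ partition $\mathbb{R}_+ \times \Omega$, and $L$ and $W$ have a.s.\ no common jumps (being independent and quasi-left continuous), the semimartingale characteristics of the two integrals add up to those of $L$, so $V$ is a quasi-left continuous semimartingale with the same deterministic characteristics as $L$, i.e.\ an SII with local characteristics $(b, c, F; A)$. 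Since $\u(X)$ vanishes on $G$, we have $\1_G \bcdot V = \1_G \bcdot L$ and hence $L = \1_{G^c} \bcdot L + \1_G \bcdot V$; moreover, as $\u(X) \bcdot L = X - x_0 - \int_0^\cdot \w_s(X)\, ds$ and $(\1_{G^c}/\u(X)) \cdot \u(X) = \1_{G^c}$, associativity of the stochastic integral gives
\[
\1_{G^c} \bcdot L = \frac{\1_{G^c}}{\u(X)} \bcdot \Big( X - x_0 - \int_0^\cdot \w_s(X)\, ds \Big),
\]
which is a measurable functional of the path of $X$ alone. Consequently $(X, L) = \Psi(X, V)$ for a measurable map $\Psi \colon \D \times \D \to \D \times \D$ not depending on the particular joint solution. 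Therefore, \emph{if $X$ and $V$ are independent}, the law of $(X, V)$ is the product of the law of $X$ and the law of $V$; by weak uniqueness the first factor is the unique solution measure and by \cite[Theorem II.4.25]{JS} the second is the unique law of an SII with local characteristics $(b,c,F;A)$, so neither depends on the chosen joint solution, and hence the law of $(X,L) = \Psi(X,V)$ is the same for every joint solution. Thus weak joint uniqueness follows once the independence of $X$ and $V$ is established.

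To prove that $X$ and $V$ are independent I would argue via martingale problems. The law of $X$ is characterised as the — by hypothesis unique — solution of a martingale problem $\mathcal{M}_X$ (a measure on $(\D, \mathcal{D})$ solves $\mathcal{M}_X$ iff the coordinate process is a semimartingale with the characteristics attached to \eqref{eq: mainSDE}, equivalently iff a suitable family of test processes $M^f$ are martingales), and the law of $V$ as the unique solution of the SII martingale problem $\mathcal{M}_V$ with test martingales $N^g$. On $\B$, the pair $(X, V)$ solves the \emph{product} martingale problem $\mathcal{M}_X \otimes \mathcal{M}_V$: the processes $M^f$ and $N^g$ are both martingales on $\B$, and $[M^f, N^g] = 0$. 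The latter is the point where the explicit form of $V$ matters: the continuous martingale part of $X$ is driven by $\u(X) \bcdot L^c$ while that of $V$ equals $\1_G \bcdot L^c + \1_{G^c} \bcdot W^c$, so $[X^c, V^c] = \u(X) \1_G \bcdot C^L + \u(X) \1_{G^c} \bcdot [L^c, W^c] = 0$ (the first summand because $\u(X) \1_G = 0$, the second because $L^c$ and $W^c$ are independent), while $X$ and $V$ have no common jumps, since $\Delta X \neq 0$ forces $\u(X) \neq 0$ and $\Delta L \neq 0$, whence $\Delta V = \Delta W$, which vanishes at the jump times of $L$. Hence $f(X) g(V)$ minus its compensator is a martingale. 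It then remains to lift well-posedness of $\mathcal{M}_X$ and $\mathcal{M}_V$ to well-posedness of $\mathcal{M}_X \otimes \mathcal{M}_V$, whose only solution is the product of the two marginal laws; as $(X,V)$ is a solution of it, this forces the law of $(X,V)$ to be that product, i.e.\ the desired independence. This last step is carried out by a change-of-measure argument in the spirit of \cite{EK}: tilting $P$ by a bounded test martingale of the $V$-component leaves the characteristics of $X$ unchanged, by the orthogonality just established, so weak uniqueness forces the law of $X$ under the tilted measure to remain the solution measure; running over the test functionals of $V$ then pins the joint law down to the product.

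The main obstacle is precisely the independence of $X$ and $V$. Although $W$ is adjoined as an independent factor, $V$ re-injects the increments of $L$ over the \emph{random} predictable set $G = \{\u(X) = 0\}$, while $X$ is itself built from $L$; so independence is a genuine consequence of $L$ having independent increments, now over a path-dependent partition of time rather than over deterministic intervals. Making this rigorous requires a martingale-problem description of solution measures that is stable under enlargements of the filtration, the covariation identity $[M^f, N^g] = 0$, and — the most delicate ingredient — the transfer of well-posedness from $\mathcal{M}_X$ and $\mathcal{M}_V$ to $\mathcal{M}_X \otimes \mathcal{M}_V$; the latter needs care, since $\mathcal{M}_X$ is a time-inhomogeneous, path-dependent martingale problem rather than the homogeneous Markovian one of \cite{EK}.
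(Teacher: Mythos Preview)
Your proposal is correct and follows essentially the same route as the paper: the auxiliary process \(V = \1_{\{\u(X)=0\}}\bcdot L + \1_{\{\u(X)\neq 0\}}\bcdot W\), the recovery formula \(L = \1_{\{\u(X)\neq 0\}}/\u(X)\bcdot(X - x_0 - \int \w\,ds) + \1_{\{\u(X)=0\}}\bcdot V\), and the independence of \(X\) and \(V\) via martingale characterizations, the covariation identity \([M^f,N^g]=0\), and an \cite{EK}-type change of measure are exactly the ingredients the paper uses. The paper spells out the last step in full (its Lemmata 3.2--3.4 and the tilting \(Q(G)=E^P[M^f_S\1_G]\) followed by conditioning on \(\sigma(X)\)), but your outline of it is accurate.
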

Together with \Cref{prop: jacod}, this result implies \Cref{theo: main1intro}.

\begin{remark}
	\cite[Theorem 7.2]{J80} shows that the existence of a strong solution on \emph{some} driving system even implies the existence of a strong solution process on \emph{every} driving system. 
\end{remark}
We end this section with comments on possible applications. 
It is interesting to prove analytic conditions implying strong uniqueness. For Brownian SDEs many such conditions are known, see e.g. \cite{MANA:MANA19911510111}. There are less results for SDEs with jumps, see e.g. \cite{bass2004,li2012} for some recent results.
Due to \Cref{theo: main1intro}, instead of verifying strong uniqueness directly, one can deduced it from strong existence and weak uniqueness, where latter is in general better understood than strong uniqueness, see e.g. \cite{kuhn2018} for some recent results for L\'evy driven SDEs. 

As a question for future research, it would be interesting whether one can prove analytic conditions for strong existence without using the Yamada--Watanabe theorem, i.e. strong uniqueness. As pointed out above, this could lead to new results for strong uniqueness. 

We stress that, in general, boundedness, ellipticity and continuity\footnote{even local H\"older continuity with too small exponent} conditions do not suffice for strong existence. For the Brownian case a counterexample is given in \cite{https://doi.org/10.1112/jlms/s2-26.2.335} and for SDEs driven by certain stable L\'evy processes a counterexample is given in \cite{BASS20041}, see also \cite{SPS_2002__36__302_0} and the comment on p. 10 in \cite{bass2004}. 
For the Brownian case let us also mention an example of an SDE with degenerate diffusion coefficient and without strong solution. 
For \(\mu > 0\) consider the system
\begin{align}\label{eq: sticky system}
d X_t = \1_{\{X_t > 0\}} d W_t + \tfrac{1}{2} L^0_t (X), \quad \1_{\{X_t = 0\}} dt = \tfrac{1}{2 \mu} d L^0_t (X), \quad X_0 = x_0 \in \mathbb{R}_+,
\end{align}
where \(W\) is a Brownian motion and \(L^0(X)\) denotes the semimartingale (right) local time of \(X\) in zero. A solution process \(X\) to this system is called \emph{Brownian motion with sticky reflecting in the origin}. 
By \cite[Theorems 5, 6]{engelpeskir14}, the system \eqref{eq: sticky system} is equivalent to the single equation
\begin{align}\label{eq: SDE stick BM}
d X_t = \mu \1_{\{X_t = 0\}} dt + \1_{\{X_t > 0\}} d W_t, \quad X_0 = x_0 \in \mathbb{R}_+, 
\end{align}
which satisfies weak existence and joint weak uniqueness, but not strong existence. 
Consequently, by the Yamada--Watanabe theorem, the SDE \eqref{eq: SDE stick BM} does not satisfy strong uniqueness.
In \cite[Section 3.2]{engelpeskir14} it has been noted that strong uniqueness could be disproved in a direct manner by constructing two distinguishable solution processes, which, together with the dual Yamada--Watanabe theorem, outlines an alternative proof for \cite[Theorem 6]{engelpeskir14}.
It has also been noted that Cherny's \cite{doi:10.1137/S0040585X97979093} version of \Cref{prop: main1} would shorten the proof of \cite[Theorem 5]{engelpeskir14}. 

\section{Proof of \Cref{prop: main1}}\label{sec: 3}
Assume that the SDE \eqref{eq: mainSDE} satisfies weak uniqueness and let \(X\) be a solution process on a driving system \(((\Omega^*, \mathcal{F}^*, (\mathcal{F}^*_t)_{t \geq 0}, P^*), L)\).
Furthermore, let \(((\Omega^o, \mathcal{F}^o, (\mathcal{F}^o_t)_{t \geq 0}, P^o), U)\) be a second driving system, set
\[
\Omega \triangleq \Omega^* \times \Omega^o, \qquad \mathcal{F} \triangleq \mathcal{F}^* \otimes \mathcal{F}^o, \qquad P \triangleq P^* \otimes P^o,
\]
and define \(\mathcal{F}_t\) to be the \(P\)-completion of the \(\sigma\)-field \(\bigcap_{s > t} (\mathcal{F}^*_s \otimes \mathcal{F}^o_s)\). 
In the following \(\mathbb{B} = (\Omega, \mathcal{F}, (\mathcal{F}_t)_{t \geq 0}, P)\) will be our underlying filtered probability space. We extend \(X, L\) and \(U\) to \(\B\) by setting
\[
X (\omega^*, \omega^o) \equiv X(\omega^*), \qquad L (\omega^*, \omega^o) \equiv L(\omega^*), \qquad U(\omega^*, \omega^o) \equiv U(\omega^o)
\]
for \((\omega^*, \omega^o) \in \Omega\).
Due to the results in \cite[Section 10.2 b)]{J79}, \((\B, L)\) and \((\B, U)\) are driving systems and \(X\) is a solution process on \((\B, L)\).
Next, we define a semimartingale \(V\) by
\[
d V_t \triangleq \1_{\{\u_t(X) \not = 0\}} d U_t + \1_{\{\u_t (X) = 0\}} d L_t.
\]
The following lemma is proven after the proof of \Cref{prop: main1} is complete.
\begin{lemma}\label{lem: main1}
	The process \(V\) is an SII with the same local characteristics as \(L\). Moreover, \(V\) is independent of \(X\).
\end{lemma}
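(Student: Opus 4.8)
The plan is to prove the two assertions separately: first that $V$ is an SII with local characteristics $(b,c,F;A)$, which is a computation with the characteristics of stochastic integrals, and then — the technical core — that $V$ is independent of $X$, via a martingale problem characterization of $\mathrm{Law}(X)$, a change of measure, and weak uniqueness.

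\smallskip
\emph{The local characteristics of $V$.} Write $H \triangleq \1_{\{\u(X) \neq 0\}}$; this is a $\{0,1\}$-valued $\Df$-predictable process (as $\u$ is $\Df$-predictable and $X$ is adapted), so $V = H \bcdot U + (1-H)\bcdot L$. On $\B$ both $L$ and $U$ are SIIs with the same characteristics $(B,C,\nu) = (b\bcdot A,\, c\bcdot A,\, F\cdot A)$, and, being built from the two independent factors of $\B$, they are independent; in particular $\langle L^c, U^c\rangle = 0$ and $L, U$ have no common jumps, so the $\mathbb{R}^2$-valued semimartingale $(L,U)$ has block-diagonal characteristics. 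I would then apply the transformation rules for the characteristics of a stochastic integral with respect to a vector semimartingale \cite{JS} to $V = (1-H, H)\bcdot (L,U)$. Using $H^2 = H$, $(1-H)^2 = 1-H$, $H + (1-H) = 1$ and $h(0) = 0$, all mixing and truncation-correction terms collapse: the second characteristic of $V$ comes out as $(1-H)\bcdot C + H\bcdot C = C$, the jump compensator as the image of the axis-supported $\nu^{(L,U)}$ under $(x,y)\mapsto (1-H)x + Hy$, which is $\nu$, and the first characteristic as $B$. Thus $V$ has the deterministic characteristics $(B,C,\nu)$, and since $A$, hence $B$, is continuous and $\nu(\{t\}\times\mathbb{R}) = 0$, it is quasi-left continuous; so $V$ is an SII with local characteristics $(b,c,F;A)$. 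The same elementary computations, using in addition $\u(X)(1-H) = \u(X)\1_{\{\u(X)=0\}} = 0$ and $\Delta L\,\Delta U = 0$, and writing $X^c = \u(X)\bcdot L^c$, $\Delta X = \u(X)\Delta L$ and $V^c = (1-H)\bcdot L^c + H\bcdot U^c$, $\Delta V = (1-H)\Delta L + H\Delta U$, show that
\[
\langle X^c, V^c\rangle = 0 \qquad\text{and}\qquad \Delta X\,\Delta V = 0,
\]
i.e. $X$ and $V$ have no common jumps; these will be needed below.

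\smallskip
\emph{Martingale problems and vanishing brackets.} Being a solution process on $\B$, $X$ is a semimartingale with prescribed path-dependent characteristics (explicit functionals of $\w,\u,b,c,F,A$); equivalently \cite[Theorem II.2.42]{JS}, a countable family of test processes $N^{g}(X)$, with $g$ ranging over a suitable class, are $(\mathcal{F}_t,P)$-local martingales. Since moreover every solution of the associated martingale problem on $(\D,\mathcal{D})$ is a solution measure — one reconstructs an SII driver by a product extension that fills in an independent SII on $\{\u(\X)=0\}$, cf. \cite{J80} — weak uniqueness says that $P^X \triangleq \mathrm{Law}(X)$ is the \emph{unique} probability measure on $(\D,\mathcal{D})$ with $\X_0 = x_0$ under which all the $N^{g}(\X)$ are local martingales. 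On the other hand, as $V$ is an SII with characteristics $(B,C,\nu)$, we have a rich family of positive exponential $(\mathcal{F}_t,P)$-martingales $D^f \triangleq \mathcal{E}\big(f^c\bcdot V^c + (e^{f^d}-1)\ast(\mu^V - \nu^V)\big)$, indexed by bounded deterministic $f^c$ and suitable bounded deterministic $f^d$, with $D^f_0 = 1$, $E_P[D^f_T] = 1$ for every $T$, and such that $\{D^f_T : f\}$ determines $\mathrm{Law}(V\vert_{[0,T]})$. Since the continuous martingale part of $N^{g}(X)$ is driven by $X^c$ and its jumps occur only where $X$ jumps, while the continuous martingale part of $D^f$ is driven by $V^c$ and its jumps occur only where $V$ jumps, the relations $\langle X^c, V^c\rangle = 0$ and $\Delta X\,\Delta V = 0$ yield $[N^{g}(X), D^f] = 0$ for all $g, f$.

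\smallskip
\emph{Independence via change of measure.} Fix $T$ and $f$, and set $\widetilde P \triangleq D^f_T \cdot P$ on $\mathcal{F}_T$, a probability measure. Since $D^f > 0$ and $[N^{g}(X), D^f] = 0$, Girsanov's theorem produces no drift correction, so each $N^{g}(X)$ remains an $(\mathcal{F}_t, \widetilde P)$-local martingale on $[0,T]$; hence under $\widetilde P$ the process $X$ still has the prescribed characteristics, so $\mathrm{Law}_{\widetilde P}(X)$ solves the martingale problem, so it is a solution measure, whence $\mathrm{Law}_{\widetilde P}(X) = P^X = \mathrm{Law}_P(X)$ by weak uniqueness. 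Therefore, for every bounded measurable $\Psi \colon \D \to \mathbb{R}$,
\[
E_P\big[\Psi(X)\, D^f_T\big] = E_{\widetilde P}\big[\Psi(X)\big] = \int \Psi\, dP^X = E_P[\Psi(X)] = E_P[\Psi(X)]\, E_P\big[D^f_T\big].
\]
Letting $f$ and $T$ vary and using that $\{D^f_T\}$ determines $\mathrm{Law}(V\vert_{[0,T]})$, this gives $E_P[\Psi(X)\,\Theta(V)] = E_P[\Psi(X)]\,E_P[\Theta(V)]$ for all bounded measurable $\Psi,\Theta$, i.e. $V$ is independent of $X$, which completes the proof.

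\smallskip
\emph{Main obstacle.} The heart of the matter is the change-of-measure step, and this is where weak uniqueness is indispensable: one must know that tilting $P$ by a positive exponential martingale built from $V$ leaves the \emph{entire} martingale problem for $X$ intact — which rests on the block-diagonal structure of the characteristics of $(X,V)$ (the vanishing covariation and the absence of common jumps), on Girsanov's theorem, and on the equivalence ``martingale problem $\Leftrightarrow$ solution measure'' (reconstruction of a driver) — and that the tilting functionals $D^f_T$ are numerous enough to pin down $\mathrm{Law}(V)$. The remaining points, namely the precise transformation rules for characteristics in the first part and the localization and integrability bookkeeping for the exponential martingales, are routine.
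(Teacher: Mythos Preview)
Your strategy matches the paper's: compute that $V$ inherits the deterministic characteristics $(b,c,F;A)$; show the $X$-driven and $V$-driven test (local) martingales have vanishing bracket; tilt $P$ by a positive martingale built from $V$, observe via the vanishing bracket that the martingale problem for $X$ is unchanged, and invoke weak uniqueness. The paper's execution differs in two ways. First, it uses the multiplicative family $M^f=\tfrac{f(V)}{f(0)}\exp\bigl(-\int_0^{\cdot}\mathcal Lf(V_s,s)/f(V_s)\,dA_s\bigr)$ for $f\in C^2_b$, $\inf f>0$, in place of your stochastic exponentials $D^f$; this is cosmetic. Second --- and this is the substantive difference --- instead of your final density claim (``$\{D^f_T\}$ determines $\mathrm{Law}(V\vert_{[0,T]})$''), the paper runs a short second step: having obtained $E_P[M^f_S\mathbf 1_F]=P(F)$ for all $F\in\sigma(X)$ and all bounded stopping times $S$, it passes to the conditional measure $Q^*\triangleq P(\,\cdot\cap F)/P(F)$, notes that $E^{Q^*}[M^f_S]=1$ for every $S$ so each $M^f$ is a $Q^*$-\emph{martingale}, and then invokes the SII martingale characterization plus uniqueness in law of SIIs to get $Q^*\circ V^{-1}=P\circ V^{-1}$. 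This route uses the martingale property under the conditioned measure rather than a separating-family argument for expectations, and thereby avoids having to check that real-valued exponential functionals of $V$ separate probability measures on path space --- a point which, for a general SII without exponential moments, needs more care than your sketch indicates.
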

Since
\[
d L_t = \frac{\1_{\{\u_t (X) \not = 0\}} (dX_t - \w_t (X) dt)}{\u_t (X)} + \1_{\{\u_t (X) = 0\}} d V_t,
\]
the distribution of \((X, L)\) is completely determined by the distribution of \((X, V)\) and \Cref{lem: main1} yields that weak joint uniqueness is implied by weak uniqueness. The proof of \Cref{prop: main1} is complete.
\qed
\begin{remark}
	In case \(\u\) is non-degenerate, i.e. \(\u \not = 0\), it is clear that \(V = U\) and \Cref{lem: main1} becomes trivial. In particular, for the key argument it is not necessary to introduce the auxilliary process \(U\), see \eqref{eq: intro} in the introduction.
\end{remark}

It remains to prove \Cref{lem: main1}:
\\

\noindent
\emph{Proof of \Cref{lem: main1}:}
\emph{Step 1.} 
Recall that the local characteristics of \(L\) are denoted by \((b, c, F; A)\).
Our first step is to show that \(V\) is an SII with local characteristics \((b, c, F; A)\) by computing its semimartingale characteristics \((B^V, C^V, \nu^V)\).
We start with the first characteristic \(B^V\). 
For \(\h(x) \triangleq x - h(x)\), we have
\begin{align*}
\sum_{s \leq \cdot} \h (\Delta V_s) &= \sum_{s \leq \cdot} \h(\1_{\{\u_s(X) \not= 0\}} \Delta U_s + \1_{\{\u_s(X) = 0\}} \Delta L_s) (\1_{\{\u_s (X) \not= 0\}} + \1_{\{\u_t (X) = 0\}})
\\&= \sum_{s \leq \cdot} \h( \Delta U_s) \1_{\{\u_s(X) \not= 0\}} + \sum_{s \leq \cdot} \h(\Delta L_s) \1_{\{\u_s (X) = 0\}}.
\end{align*}
The definition of the first semimartingale characteristic shows that the processes 
\begin{align*}
U - \sum_{s \leq \cdot} \h(\Delta U_s) - \int_0^\cdot b_s dA_s, \qquad 
L - \sum_{s \leq \cdot} \h(\Delta L_s) - \int_0^\cdot b_s dA_s 
\end{align*}
are local martingales.
Hence, the process
\begin{align*}
V - \sum_{s \leq \cdot} \h(\Delta V_s) - \int_0^\cdot b_s d A_s &= \1_{\{\u (X) \not = 0\}} \bcdot \Big( U - \sum_{s \leq \cdot} \h(\Delta U_s) - \int_0^\cdot b_s d A_s \Big) \\&\qquad\quad+ \1_{\{\u (X) = 0\}} \bcdot \Big( L - \sum_{s \leq \cdot} \h(\Delta L_s) - \int_0^\cdot b_s dA_s\Big) 
\end{align*}
is a local martingale, too.
This implies that \(d B^V_t = b_t dA_t\).
Next, we compute the second characteristic \(C^V\).
Using that
\(
d V^c_t = \1_{\{\u_t(X) \not = 0\}} d U^c_t + \1_{\{\u_t (X) = 0\}} d L^c_t,
\)
we obtain 
\[
d C^V_t = d [V^c, V^c]_t = \1_{\{\u_t(X) \not= 0\}} d [U^c, U^c]_t +  \1_{\{\u_t(X) = 0\}} d [L^c, L^c]_t 
= c_t dA_t.
\]
Finally, we compute the third characteristic \(\nu^V\). 
For every bounded stopping time \(T\) and any bounded Borel function \(G \colon \mathbb{R} \to \mathbb{R}\) which vanishes in a neighborhood of the origin, we obtain
\begin{align*}
E \Big[ \sum_{s \leq T} G(\Delta V_s)\Big] &= E \Big[ \sum_{s \leq T} \big(G(\Delta U_s) \1_{\{\u_s(X) \not= 0\}} + G (\Delta L_s)\1_{\{\u_s(X) = 0\}}\big) \Big]
\\&= E \Big[ \int_0^T \hspace{-0.125cm}\int \big(G(x) \1_{\{\u_s (X) \not = 0\}} +G(x) \1_{\{\u_s (X) = 0\}} \big) F_s (dx) d A_s \Big]
\\&= E \Big[ \int_0^T \hspace{-0.125cm}\int G(x) F_s (dx) d A_s\Big].
\end{align*}
This shows that
\(
\nu^V(dx, dt) = F_t(dx) dA_t.
\)
In summary, the process \(V\) is an SII with local characteristics \((b, c, F; A)\).
\\

\noindent
\emph{Step 2.} In this step we show that \(V\) and \(X\) are independent. Hereby, we borrow ideas used in the proof of \cite[Theorem 4.10.1]{EK}.
We start with three preparatory lemmata.
\begin{lemma}\label{lem: biv (U, L)}
	The bivariate process \((U, L)\) is a two-dimensional SII with local characteristics \((b^{(U, L)}, c^{(U, L)}, F^{(U, L)}; A)\) given by
	\[
	b^{(U, L)} = (b, b), \quad c^{(U, L)} = \begin{pmatrix} c&0\\0&c\end{pmatrix}, \quad 
	F^{(U, L)} (dx, dy) = F(dx) \delta_0 (dy) + \delta_0 (dx) F(dy).
	\]
\end{lemma}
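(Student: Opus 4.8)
The plan is to identify $(U,L)$ as a two-dimensional SII and then to read off its three characteristics relative to the product truncation function $\widehat h(x,y) := (h(x),h(y))$ on $\mathbb{R}^2$, in direct parallel with Step~1. Since $U$ and $L$ are carried, respectively, by the two independent factors $\Omega^o$ (for $U$) and $\Omega^*$ (for $L$) of the product space underlying $\B$, they are independent; and as each of them has independent increments, the pair $(U,L)$ has independent increments as well (the $U$-increments over disjoint intervals are mutually independent, so are the $L$-increments, and the entire $U$-family is independent of the entire $L$-family). Moreover $(U,L)$ is, component by component, a \cadlag semimartingale, hence a two-dimensional semimartingale, and it is quasi-left continuous since $\Delta U_T = \Delta L_T = 0$ a.s.\ for every predictable time $T$ by quasi-left continuity of $U$ and $L$. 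By \cite[Theorem~II.4.15]{JS}, $(U,L)$ is therefore a two-dimensional SII, so its characteristics $(B^{(U,L)},C^{(U,L)},\nu^{(U,L)})$ are deterministic, and it only remains to identify them; throughout I would use that on $\B$ both $U$ and $L$ retain the local characteristics $(b,c,F;A)$, cf.\ \cite[Section~10.2~b)]{J79}.

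For the first characteristic, $\Delta(U,L)_s - \widehat h(\Delta(U,L)_s) = (\h(\Delta U_s),\h(\Delta L_s))$, so
\[
(U,L) - \sum_{s \leq \cdot}\bigl(\Delta(U,L)_s - \widehat h(\Delta(U,L)_s)\bigr) = \Bigl(U - \sum_{s \leq \cdot}\h(\Delta U_s),\; L - \sum_{s \leq \cdot}\h(\Delta L_s)\Bigr),
\]
whose predictable part of finite variation is $\bigl(\int_0^\cdot b_s\,dA_s,\int_0^\cdot b_s\,dA_s\bigr)$ by the univariate first characteristics of $U$ and $L$; hence $b^{(U,L)} = (b,b)$. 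For the second characteristic, the continuous local martingale part of $(U,L)$ is $(U^c,L^c)$, so the diagonal entries of $C^{(U,L)}$ equal $[U^c,U^c] = [L^c,L^c] = \int_0^\cdot c_s\,dA_s$, while the off-diagonal entry $[U^c,L^c]$ vanishes: $U^c$ and $L^c$ are independent continuous local martingales (each measurable for the completed natural filtration of the respective coordinate), so $U^c L^c$ is an $(\mathcal{F}_t)$-local martingale, and then $[U^c,L^c]$, being at once continuous, of finite variation and a local martingale null at the origin, is identically zero. Thus $c^{(U,L)}$ is the matrix with diagonal $(c,c)$ and vanishing off-diagonal.

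The main obstacle is the third characteristic, and more precisely the claim that $U$ and $L$ a.s.\ have no common jump time. Since $A$ is continuous, $P(\Delta U_t\neq 0) = P(\Delta L_t\neq 0) = 0$ for every fixed $t\in\mathbb{R}_+$; fixing $\omega^o$, the jump set $\{s : \Delta U_s(\omega^o)\neq 0\}$ is countable, so $P^*\bigl(\exists\, s : \Delta U_s(\omega^o)\neq 0 \text{ and } \Delta L_s\neq 0\bigr) = 0$, and integrating over $\omega^o$ with Tonelli's theorem (recall $P = P^*\otimes P^o$) gives $P\bigl(\exists\, s : \Delta U_s\neq 0 \text{ and } \Delta L_s\neq 0\bigr) = 0$. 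Granting this, the jump measure of $(U,L)$ is supported by the two coordinate axes, so for every bounded $(\mathcal{F}_t)$-stopping time $T$ and every bounded Borel $G\colon\mathbb{R}^2\to\mathbb{R}$ vanishing in a neighborhood of the origin, using $G(0,0)=0$,
\[
E\Bigl[\sum_{s\leq T}G(\Delta(U,L)_s)\Bigr] = E\Bigl[\sum_{s\leq T}G(\Delta U_s,0)\,\1_{\{\Delta U_s\neq 0\}}\Bigr] + E\Bigl[\sum_{s\leq T}G(0,\Delta L_s)\,\1_{\{\Delta L_s\neq 0\}}\Bigr],
\]
and inserting the compensators $F_s(dx)\,dA_s$ of the jump measures of $U$ and of $L$ turns the right-hand side into $E\bigl[\int_0^T\!\int G(x,y)\,(F_s(dx)\delta_0(dy) + \delta_0(dx)F_s(dy))\,dA_s\bigr]$. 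Hence $\nu^{(U,L)}(dx,dy,dt) = (F_t(dx)\delta_0(dy) + \delta_0(dx)F_t(dy))\,dA_t$, which is the asserted form of $F^{(U,L)}$.

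Beyond the no-common-jump step, the only remaining care is bookkeeping: making precise that $U$, its continuous martingale part $U^c$ and its jump measure depend only on $\omega^o$ (and symmetrically for $L$, on $\omega^*$) after the extension to $\B$, so that independence, Tonelli's theorem and the product structure of conditional expectations can legitimately be invoked; the three identifications above then run exactly as in Step~1, and the proof is complete.
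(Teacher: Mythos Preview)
Your proof is correct, but it follows a different route from the paper's. The paper proceeds in two short strokes: first it shows carefully that $(U_t-U_s,L_t-L_s)$ is independent of $\mathcal{F}_s$ for the enlarged filtration $(\mathcal{F}_t)_{t\ge0}$ (approximating through $\bigcap_{r>s}(\mathcal{F}^*_r\otimes\mathcal{F}^o_r)$ and using right-continuity of paths together with a monotone class argument), and then it reads off all three characteristics in one line from the L\'evy--Khinchine formula \cite[Theorem~II.4.15]{JS} for the pair, using the independence of $U$ and $L$ to factor the characteristic function and invoking the uniqueness lemma \cite[Lemma~II.2.44]{JS}. You instead compute $B^{(U,L)},C^{(U,L)},\nu^{(U,L)}$ by hand, exactly mirroring Step~1 of the proof of \Cref{lem: main1}. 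Your approach is more elementary and self-contained; in particular, your Tonelli argument for ``no common jump times'' is explicit, whereas in the paper this fact is only extracted \emph{afterwards} from the form of $F^{(U,L)}$. The paper's approach, on the other hand, is considerably shorter and avoids having to justify the product-martingale step for $U^cL^c$.

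Two small remarks. First, your opening independent-increments argument establishes only the \emph{intrinsic} notion (increments over disjoint intervals are jointly independent), not independence from $\mathcal{F}_s$, which is what \cite[Theorem~II.4.15]{JS} requires; the paper is careful here precisely because $(\mathcal{F}_t)$ may be strictly larger than the natural filtration of $(U,L)$. This is harmless in your write-up, since your direct computations already yield deterministic $(\mathcal{F}_t)$-characteristics (using that $U$ and $L$ individually retain their local characteristics on $\B$, via \cite[Section~10.2~b)]{J79}), which is the definition of SII; so the appeal to II.4.15 is in fact redundant. Second, the claim that $U^cL^c$ is an $(\mathcal{F}_t)$-local martingale does need the product-space bookkeeping you flag at the end (localize, then use Fubini with $P=P^*\otimes P^o$ against $\mathcal{F}^*_s\otimes\mathcal{F}^o_s$, then pass to the right-continuous completion); this is routine but should be spelled out if you keep your approach.
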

\begin{proof}
	It is clear that \((U, L)\) is a two-dimensional semimartingale. 
	Fix \(0 \leq s < t\). By the construction of \(\B\) and a monotone class argument, we see that \((U_t - U_s, L_t - L_s)\) is independent of \(\mathcal{F}^*_s \otimes \mathcal{F}^o_s\). Let \(f \colon \mathbb{R}^2 \to \mathbb{R}_+\) be bounded and continuous and take \(G \in \mathcal{F}_s\). For every \(\varepsilon \in (0, t - s)\) we have
	\[
	E \big[ f(U_t - U_{s + \varepsilon}, L_t - L_{s + \varepsilon}) \1_G \big] = E\big[ f(U_t - U_{s + \varepsilon}, L_t - L_{s + \varepsilon})\big] P(G).
	\]
	Thus, using that \(U\) and \(L\) have right-continuous paths, that for every closed set \(F \subseteq \mathbb{R}^2\) there exists a uniformly bounded sequence \((f_n)_{n \in \mathbb{N}}\) of non-negative continuous functions such that \(f_n (x) \to \1_F (x)\), and a monotone class argument, we conclude that \((U, L)\) has independent increments relative to the filtration \((\mathcal{F}_t)_{t \geq 0}\).
	Finally, the structure of the characteristics follows from the independence of \(U\) and \(L\), the L\'evy--Khinchine formula, see \cite[Theorem II.4.15]{JS}, and the uniqueness lemma \cite[Lemma~II.2.44]{JS}.
\end{proof}

For \(f \in C^2_b(\mathbb{R})\), which is the space of bounded twice continuously differentiable functions with bounded first and second derivative, and \((x, t) \in \mathbb{R} \times \mathbb{R}_+\) we set
\[
\mathcal{L}f (x, t) \triangleq b_t f'(x) + \frac{c_t}{2} f''(x) + \int \big( f(x + y) - f(x) - h(y) f'(x) \big) F_t(dy).
\]
The next lemma is a martingale characterization for SIIs.
\begin{lemma}\label{lem: main3}
	On a filtered probability space with right-continuous filtration, let \(Y\) be an adapted process with \cadlag paths and starting value \(Y_0 = 0\).
	Then, the following are equivalent: 
	\begin{enumerate} \item[\textup{(i)}] \(Y\) is an SII with local characteristics \((b, c, F; A)\).
		\item[\textup{(ii)}] For all \(f \in C^2_b(\mathbb{R})\) with \(\inf_{x \in \mathbb{R}} f(x) > 0\) the process
		\begin{align}\label{eq: Mf}
		M^f \triangleq \frac{f(Y)}{f (0)} \exp \Big( - \int_0^\cdot \frac{\mathcal{L}f (Y_s, s) dA_s}{f (Y_s)} \Big)
		\end{align}
		is a martingale.
	\end{enumerate}
\end{lemma}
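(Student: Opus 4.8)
The plan is to prove the equivalence in \Cref{lem: main3} by reducing it to the well-known martingale-problem characterization of semimartingales via their characteristics, as recorded in \cite[Theorem II.2.42]{JS} or the closely related results in \cite[Chapter II]{JS}. Since an SII is by definition a quasi-left continuous semimartingale with deterministic characteristics, and since deterministic characteristics automatically force independence of increments (by \cite[Theorem II.4.15]{JS}), the real content is a characterization of the characteristics $(B,C,\nu) = (\int_0^\cdot b_s\,dA_s, \int_0^\cdot c_s\,dA_s, F_s(dx)\,dA_s)$ through the exponential martingales $M^f$. So I would first recall that $Y$ has characteristics of this specific deterministic form if and only if for every $f\in C^2_b(\mathbb{R})$ the process $f(Y_\cdot) - f(0) - \int_0^\cdot \mathcal{L}f(Y_s,s)\,dA_s$ is a local martingale (this is essentially It\^o's formula combined with \cite[Theorem II.2.42]{JS}); the passage from local martingale to martingale is harmless here because $f$ and $\mathcal{L}f$ are bounded on the relevant range, provided $A$ is locally bounded, which it is by assumption.

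Next I would establish the implication (i)\,$\Rightarrow$\,(ii). Assuming $Y$ is an SII with the stated local characteristics, fix $f\in C^2_b(\mathbb{R})$ with $\inf f > 0$ and set $g \triangleq \log f$, so $g\in C^2_b(\mathbb{R})$ as well (here boundedness away from zero of $f$ is used to control $g', g''$ and $\mathcal{L}f/f$). The process $M^f$ in \eqref{eq: Mf} is then of the form $\exp\big(g(Y_\cdot) - g(0) - \int_0^\cdot (\mathcal{L}f/f)(Y_s,s)\,dA_s\big)$, and I would identify it with a stochastic exponential: apply It\^o's formula to $f(Y)$ using the characteristics, divide by $f(Y_{-})$, and observe that the resulting drift cancels exactly against $\int_0^\cdot (\mathcal{L}f/f)(Y_s,s)\,dA_s$, so that $M^f = \mathcal{E}(N)$ for a local martingale $N$ of the form $N = \int_0^\cdot \frac{f'(Y_{s-})}{f(Y_{s-})}\,dY^c_s + \text{(compensated jump part)}$. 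Positivity and boundedness of $M^f$ (again from $\inf f>0$ and $A$ locally bounded, together with the integrability built into the definition of $F$) upgrade the local martingale to a true martingale. Some care is needed with the jump term: one must check that $\sum_{s\le\cdot}\big(\frac{f(Y_s)}{f(Y_{s-})} - 1 - \frac{f'(Y_{s-})}{f(Y_{s-})}h(\Delta Y_s)\big)$ has the right compensator, which is exactly what the integral term $\int(f(x+y)-f(x)-h(y)f'(x))F_t(dy)$ in $\mathcal{L}f$ encodes.

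For the converse (ii)\,$\Rightarrow$\,(i), the strategy is to run the computation of the previous paragraph backwards. From the martingale property of $M^f$ for all such $f$, and using $M^f = \mathcal{E}(N^f)$, one extracts that $f(Y_\cdot) - f(0) - \int_0^\cdot \mathcal{L}f(Y_s,s)\,dA_s$ is a local martingale for every $f\in C^2_b$ with $\inf f>0$; a routine approximation/linearization argument (applying the statement to $1+\epsilon f$ for small $\epsilon>0$ and differentiating at $\epsilon=0$, as in \cite[Theorem 4.10.1]{EK} or \cite[Chapter II]{JS}) removes the positivity restriction and yields it for all $f\in C^2_b(\mathbb{R})$. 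By the martingale-problem characterization of characteristics \cite[Theorem II.2.42]{JS}, this forces the semimartingale characteristics of $Y$ to be $(\int_0^\cdot b_s\,dA_s, \int_0^\cdot c_s\,dA_s, F_s(dx)\,dA_s)$; in particular they are deterministic and $Y$ is quasi-left continuous (since the compensator $\nu^Y(\{t\}\times\mathbb{R}) = F_t(\mathbb{R})\Delta A_t = 0$ as $A$ is continuous), hence $Y$ is an SII with the prescribed local characteristics. I expect the main obstacle to be the bookkeeping in the jump part of It\^o's formula — verifying that the exponential form $M^f$ corresponds precisely to $\mathcal{E}(N^f)$ with the jump compensator matching $\mathcal{L}f$, and checking the integrability conditions that justify passing between local and true martingales — rather than any conceptual difficulty; everything else is a standard translation between the exponential-martingale and additive-martingale formulations of the martingale problem.
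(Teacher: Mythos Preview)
Your approach is correct and follows the same overall strategy as the paper: in both directions one reduces to the martingale-problem characterization \cite[Theorem II.2.42]{JS} by relating $M^f$ to $f(Y)-\int_0^\cdot \mathcal{L}f(Y_s,s)\,dA_s$ via the product rule. For (i)\,$\Rightarrow$\,(ii) the paper simply applies integration by parts to obtain
\[
dM^f_t=\frac{1}{f(0)}\exp\Big(-\int_0^t\frac{\mathcal{L}f(Y_s,s)}{f(Y_s)}\,dA_s\Big)\big(df(Y_t)-\mathcal{L}f(Y_t,t)\,dA_t\big),
\]
which is exactly the computation you describe in stochastic-exponential language; boundedness of $M^f$ on bounded time intervals then upgrades the local martingale to a martingale. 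For (ii)\,$\Rightarrow$\,(i) the paper is more economical than your approximation scheme: instead of extending to all of $C^2_b(\mathbb{R})$, it applies (ii) directly to the test functions $f(x)=2+\sin(ux)$ and $f(x)=2+\cos(ux)$, which already satisfy $\inf f>0$ and immediately give condition (b) of \cite[Theorem II.2.42]{JS}. Incidentally, your $1+\epsilon f$ trick requires no differentiation at $\epsilon=0$: since $\mathcal{L}$ is linear and annihilates constants, the additive martingale for $1+\epsilon f$ equals $\epsilon$ times that for $f$, so one step suffices.
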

\begin{proof}
	Suppose that (i) holds and let \(f \in C^2_b(\mathbb{R})\) with \(\inf_{x \in \mathbb{R}} f (x) > 0\). Integration by parts yields that 
	\begin{align}\label{eq: ito Mf}
	d M^f_t = \frac{1}{f(0)}  \exp \Big( - \int_0^t \frac{\mathcal{L}f (Y_s, s) d A_s}{f (Y_s)}\Big) (d f(Y_t) - \mathcal{L} f(Y_t, t) dA_t).
	\end{align}
	By \cite[Theorem II.2.42]{JS}, the process \(f (Y) - \int_0^\cdot \mathcal{L} f(Y_s, s) dA_s\) is a local martingale. Thus, also \(M^f\) is a local martingale. As \(h\) is a truncation function, there exists a constant \(\varepsilon > 0\) such that \(h(x) = x\) whenever \(x \in (- \varepsilon,\varepsilon)\). Furthermore, by Taylor's theorem, 
	\[
	| f(x + y) - f(x) - f'(x) y | \leq \tfrac{1}{2} \|f''\|_\infty |y|^2, \quad y \in \mathbb{R}.
	\]
	Thus, \(\int_0^\cdot \int (1 \wedge |y|^2) F_s (dy) dA_s < \infty\) implies that the process \(M^f\) is bounded on bounded time intervals and consequently, a martingale. We conclude that (ii) holds.
	
	Conversely, assume that (ii) holds. Again, let \(f \in C^2_b(\mathbb{R})\) with \(\inf_{x \in \mathbb{R}} f (x) > 0\).  Then, integration by parts yields that 
	\[
	d f (Y_t)  - \mathcal{L} f (Y_t, t) dA_t = f (0) \exp \Big(\int_0^t \frac{\mathcal{L}f (Y_s, s) d A_s}{f (Y_s)}\Big) d M^f_t.
	\]
	Hence, \(f (Y) - \int_0^\cdot \mathcal{L} f (Y_s, s) dA_s\) is a local martingale. Applying this observation with \(f(x) = 2 + \sin (u x)\) and \(f (x) = 2 + \cos (u x)\) for \(u \in \mathbb{R}\) yields that b) of \cite[Theorem~II.2.42]{JS} holds and (i) follows. 
\end{proof}
Next, we also give a martingale characterization for the set of solution measures to the SDE \eqref{eq: mainSDE}. 
For \(g \in C^2_b(\mathbb{R})\) and \((\omega, s) \in \D \times \mathbb{R}_+\) we set 
\begin{align*}
\mathcal{K} g (\omega, s) \triangleq &\ \u_s (\omega) b_s g' (\omega (s-)) + \frac{\u^2_s (\omega) c_s}{2} g'' (\omega (s-)) \\&\ \ + \int ( g(\omega(s-) + \u_s (\omega) x) - g(\omega(s-)) - \u_s (\omega) h(x) g' (\omega (s-)) ) F_s(dx).
\end{align*}
Similar to the proof of \cite[Theorem 14.80]{J79}, the following lemma follows from representation results for continuous local martingales and integer valued random measures. We refer to \cite{kurtz10} for an approach not relying on representation theorems.

\begin{lemma}\label{lem: main2}
	A probability measure \(Q\) on \((\D, \mathcal{D})\) is a solution measure to the SDE \eqref{eq: mainSDE} if and only if \(Q(\X_0 = x_0) = 1\), \(Z\) as defined in \eqref{eq: int well-defined} is \(Q\)-a.s. finite, and for all \(g \in C^2_b (\mathbb{R})\) the process
	\begin{align}\label{eq: Kf}
	K^g \triangleq g(\X) - g(x_0) - \int_0^\cdot \w_s (\X)g' (\X_{s}) ds -  \int_0^\cdot \mathcal{K} g(\X, s) d A_s
	\end{align}
	is a local \((\Df^Q, Q)\)-martingale. Furthermore, for every solution process \(X\) to the SDE~\eqref{eq: mainSDE} the process \(K^g \circ X\) is a local martingale on the corresponding driving system.
\end{lemma}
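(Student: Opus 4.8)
\emph{Proof plan.} The two implications are treated separately: the forward implication together with the final ``furthermore'' assertion is essentially It\^o's formula, whereas the converse reduces to the correspondence between martingale problems and semimartingale characteristics and to the representation results underlying \cite[Theorem~14.80]{J79}. For the forward direction, let \(X\) be a solution process to \eqref{eq: mainSDE} on a driving system \((\B, L)\) and fix \(g \in C^2_b(\mathbb{R})\). I would apply It\^o's formula to \(g(X)\), insert the dynamics \(dX_s = \w_s(X)\, ds + \u_s(X)\, dL_s\) together with the canonical representation of the SII \(L\) relative to its deterministic characteristics \((b, c, F; A)\) (cf.\ \cite[Theorem~II.2.34]{JS}), and rearrange the jump terms using the truncation function \(h\) and its complement \(\h(x) = x - h(x)\). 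This collects all \(ds\)- and \(dA_s\)-contributions into the predictable finite-variation process \(\int_0^\cdot \w_s(X) g'(X_s)\, ds + \int_0^\cdot \mathcal{K}g(X, s)\, dA_s\) and leaves the sum of the continuous local martingale \((g'(X_-)\u(X)) \bcdot L^c\) and the purely discontinuous local martingale \(\big( g(X_- + \u(X) x) - g(X_-) \big) * (\mu^L - \nu^L)\), where \(\mu^L\) is the jump measure of \(L\) and \(\nu^L(dx, dt) = F_t(dx)\, dA_t\). Since \(g\) and its first two derivatives are bounded, a.s.\ finiteness of the process \(Z\) from \eqref{eq: int well-defined} --- which is part of the definition of a solution process --- guarantees that both stochastic integrals are genuine local martingales and that the finite-variation part is a.s.\ finite on compacts, so \(K^g \circ X\) is a local martingale; this is the ``furthermore'' claim. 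For the necessity direction, with \(Q\) the law of \(X\) one has \(Q(\X_0 = x_0) = 1\) and \(Q\)-a.s.\ finiteness of \(Z\) at once, and since the martingale part of \(K^g \circ X\) has jumps bounded by \(2\|g\|_\infty\), a localization-and-conditioning argument transfers its local martingale property to the (smaller) \(Q\)-completed coordinate filtration \(\Df^Q\), so that \(K^g\) is a local \((\Df^Q, Q)\)-martingale.

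\emph{Sufficiency.} Conversely, assume \(Q\) satisfies the three conditions and pass to the coordinate process \(\X\) on \((\D, \mathcal{D}, \Df^Q, Q)\). First I would run the correspondence of \cite[Theorem~II.2.42]{JS} in reverse (as in the proof of \Cref{lem: main3}, testing with a countable subfamily of \(C^2_b(\mathbb{R})\) and localizing along the \(Q\)-a.s.\ finite process \(Z\)) to show that \(\X\) is a \(Q\)-semimartingale whose characteristics \((B^\X, C^\X, \nu^\X)\) have the expected form: \(C^\X_t = \int_0^t \u_s(\X)^2 c_s\, dA_s\); \(\nu^\X(dy, dt)\) is the image of \(F_t(dx)\, dA_t\) under \((t, x) \mapsto (t, \u_t(\X) x)\) restricted to \(\{\u_t(\X) \neq 0\}\); and \(B^\X\) equals \(\int_0^\cdot \w_s(\X)\, ds\) plus the predictable drift coded into \(\mathcal{K}\). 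In particular \(\X\) has no jumps, and \(\X^c\) does not move, on \(\{\u_s(\X) = 0\}\). It then remains to reconstruct a driver: on \(\{\u_s(\X) \neq 0\}\) one recovers it from \(\X\) by division, exactly as in \eqref{eq: intro}, while on the degeneracy set \(\{\u_s(\X) = 0\}\) the missing randomness must be supplied externally. Concretely I would enlarge \((\D, \mathcal{D}, \Df^Q, Q)\) --- as in \cite[Section~10.2]{J79} --- by an independent continuous local martingale \(W\) with \([W, W]_t = \int_0^t c_s\, dA_s\) and an independent integer-valued random measure \(p\) with compensator \(F_t(dx)\, dA_t\), set
\[
L^c \triangleq \tfrac{\1_{\{\u(\X) \neq 0\}}}{\u(\X)} \bcdot \X^c + \1_{\{\u(\X) = 0\}} \bcdot W, \qquad \nu^L(dx, dt) \triangleq F_t(dx)\, dA_t,
\]
let \(\mu^L\) be the integer-valued random measure with an atom at \((s, \Delta\X_s / \u_s(\X))\) at each jump time \(s\) of \(\X\) (where necessarily \(\u_s(\X) \neq 0\)) together with the atoms of \(\1_{\{\u(\X) = 0\}}\, p\), and finally put
\[
L \triangleq \int_0^\cdot b_s\, dA_s + L^c + h(x) * (\mu^L - \nu^L) + \h(x) * \mu^L .
\]
One checks that \(L^c\) is a continuous local martingale with \([L^c, L^c]_t = \int_0^t c_s\, dA_s\) and that \(\mu^L\) has compensator \(\nu^L\); hence \(L\) has the deterministic characteristics \(\big( \int_0^\cdot b_s\, dA_s, \int_0^\cdot c_s\, dA_s, F_t(dx)\, dA_t \big)\) and is an SII with local characteristics \((b, c, F; A)\) on the extended driving system. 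Comparing continuous martingale parts, jump measures and predictable drifts --- i.e.\ reading the It\^o computation of the first part backwards --- then gives \(\u(\X) \bcdot L^c = \X^c\), equality of the jump measures and drifts of \(\u(\X) \bcdot L\) and of \(\X - x_0 - \int_0^\cdot \w_s(\X)\, ds\), and hence, by uniqueness of the canonical decomposition, \(\X = x_0 + \int_0^\cdot \w_s(\X)\, ds + \u(\X) \bcdot L\), all integrals being well-defined because \(Z\) is \(Q\)-a.s.\ finite. Thus \(\X\), on the extended space, is a solution process with law \(Q\).

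\emph{Main obstacle.} The forward part is routine bookkeeping around It\^o's formula and the truncation function. The real work lies in sufficiency: deducing from the martingale problem that \(\X\) is a semimartingale whose (path-dependent) characteristics have exactly the prescribed shape, including the behaviour on the degeneracy set \(\{\u_s(\X) = 0\}\); and then carrying out the enlargement of the probability space and the representation arguments for continuous local martingales and for integer-valued random measures so as to match all three characteristics of the reconstructed driver at once. This is the technical core, handled here by reduction to \cite[Theorem~14.80]{J79}; \cite{kurtz10} offers an alternative route not relying on such representation theorems.
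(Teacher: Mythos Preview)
Your proposal is correct and follows essentially the same route as the paper: It\^o's formula plus \cite[Theorem~II.2.42]{JS} for the forward direction (with the transfer to the canonical filtration handled in the paper by \cite[Remark~10.40]{J79}), and for the converse, identification of the semimartingale characteristics of \(\X\) followed by reconstruction of a driver on a standard extension. The only cosmetic difference is that the paper delegates the reconstruction step wholesale to the representation theorems of Kabanov--Liptser--Shiryaev \cite{Kabanov_1981} (together with \cite[Lemma~3]{kallsenshiryaev02}), whereas you spell out the construction of \(L^c\) and \(\mu^L\) by hand via division on \(\{\u \neq 0\}\) and independent auxiliary noise on \(\{\u = 0\}\); this is precisely what those theorems do in the present situation.
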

\begin{proof}
	Let \(X\) be a solution process to the SDE \eqref{eq: mainSDE} and denote its law by \(Q\). Then, \cite[Theorem II.2.42]{JS} and \cite[Lemma 3]{kallsenshiryaev02} yield that the process \(K^g \circ X\) is a local martingale for each \(g \in C^2_b (\mathbb{R})\). Due to \cite[Remark 10.40]{J79} this local martingale property transfers to the canonical space, i.e. \(K^g\) is a local \((\Df^Q, Q)\)-martingale. Hence, the final claim and the \emph{only if} implication are proven.
	
	Conversely, assume that \(Q\) is as in the \emph{if} implication. Recalling again \cite[Lemma 3]{kallsenshiryaev02}, we deduce from \cite[Theorems~II.2.34,~II.2.42]{JS} and \cite[Theorems 1, 2]{Kabanov_1981} that, possibly on a standard extension of the canonical space endowed with \(Q\), there exists a continuous local martingale \(L^c\) with quadratic variation \([L^c, L^c] = \int_0^\cdot c_s d A_s\) and an integer valued random measure \(\mu (dx, dt)\) with intensity measure \(\nu (dx, dt) = F_t (dx) d A_t\) such that 
	\begin{align*}
	d \X_t = \w_t (\X) dt &+  \u_t (\X) \Big( b_t + \int \big(x \1_{\{|\u_t (\X) x| \leq 1\}} - h(x) \big) F_t (dx) \Big) d A_t
	\\&+ \u_t (\X) d L^c_t + \int \u_t (\X) x \1_{\{|\u_t (\X) x| \leq 1\}} (\mu - \nu)(dx, dt)
	\\&+ \int \u_t (X) x \1_{\{|\u_t(\X) x| > 1\}} \mu (dx, dt).
	\end{align*}
	Rearranging yields that 
	\(
	d \X_t = \w_t (\X) dt + \u_t (\X) d L_t, 
	\)
	where
	\begin{equation*} 
	\begin{split}
	d L_t \triangleq b_t d A_t + d L^c_t + \int h(x) (\mu - \nu) (dx, dt) + \int (x - h(x)) \mu (dx, dt).
	\end{split}
	\end{equation*}
	It is easy to see that \(L\) is an SII with local characteristics \((b, c, F; A)\) and consequently, the \emph{if} implication holds.
	The proof is complete.
\end{proof}

We are in the position to prove the independence of \(V\) and \(X\).
Take \(f, g \in C^2_b(\mathbb{R})\) with \(\inf_{x \in \mathbb{R}} f(x) > 0\) and define \(M^f\) and \(K^g\) as in \eqref{eq: Mf} and \eqref{eq: Kf} with \(Y\) replaced by \(V\) and \(\X\) replaced by \(X\). As shown in Step 1, \(V\) is an SII with local characteristics \((b, c, F; A)\). Hence, \(M^f\) is a martingale by \Cref{lem: main3}. Similarly, because \(X\) is a solution process to the SDE \eqref{eq: mainSDE}, \(K^g\) is a local martingale by \Cref{lem: main2}.
We now show that \([M^f, K^g] = 0.\)
Recalling \eqref{eq: ito Mf} and the proof of a) \(\Rightarrow\) c) in \cite[Theorem II.2.42]{JS}, we see that 
\[
(M^f)^c = \frac{f'(V)}{f(0)} \exp \Big( - \int_0^\cdot \frac{\mathcal{L} f(V_s, s) dA_s}{f(V_s)} \Big)  \bcdot V^c.
\]
Similarly, \((K^g)^c = g'(X) \u (X) \bcdot L^c\). 
Since \([U^c, L^c] = 0\) by \eqref{lem: biv (U, L)}, we obtain
\[
d [V^c, \u (X) \bcdot L^c]_t = \u_t(X) \1_{\{\u_t (X) \not = 0\}} d [U^c, L^c]_t + \u_t (X) \1_{\{\u_t(X) = 0\}} d [L^c, L^c]_t = 0,
\]
which implies that \([(M^f)^c, (K^g)^c] = 0\).
Using the formula for the third characteristic of \((U, L)\) as given in \eqref{lem: biv (U, L)} yields that 
\begin{align*} 
E \Big[ \sum_{s \leq \cdot} | \Delta U_s \Delta L_s | \Big] =  \int_0^\cdot \hspace{-0.1cm} \int |xy| F^{(U, L)}_s(dx, dy) dA_s = 0.
\end{align*}
In other words, \(U\) and \(L\) cannot jump at the same time, which implies that
\[\u(X) \not = 0,\Delta L \not = 0 \ \Longrightarrow \ \Delta V = \1_{\{\u (X) \not= 0\}} \Delta U + \1_{\{\u (X) = 0\}} \Delta L = 0.\] Hence, we obtain
\[
\u(X) \not = 0,\Delta L \not = 0 \ \Longrightarrow \ \Delta M^f = \frac{\Delta f (V)}{f(0)} \exp \Big( - \int_0^\cdot \frac{\mathcal{L} f(V_s, s) dA_s}{f(V_s)}\Big) = 0.
\]
Furthermore, because 
\(
\Delta X = \u (X) \Delta L, 
\)
we have 
\[
\u (X) = 0 \text{ or } \Delta L = 0 \ \Longrightarrow \ \Delta K^g = \Delta g(X) = 0.
\]
Putting these pieces together, we conclude that
\(
\sum_{s \leq \cdot} \Delta M^f_s \Delta K^g_s = 0
\)
and therefore \[[M^f, K^g] = [(M^f)^c, (K^g)^c] +\sum_{s \leq \cdot } \Delta M^f_s \Delta K^g_s = 0.\] 

For \(n \in \mathbb{N}\) we define 
\[
T_n \triangleq \inf (t \in \mathbb{R}_+ \colon |K^g_t| > n), \qquad K^{g, n} \triangleq K^g_{\cdot \wedge T_n}.
\]
As \(K^g\) has bounded jumps, \(K^{g, n}\) is bounded on bounded time intervals. Thus, by integration by parts, \([M^{f}, K^{g, n}] = [M^f, K^g]_{\cdot \wedge T_n} = 0\) yields that the process \(M^f K^{g, n}\) is a martingale which is bounded on bounded time intervals.

Now, fix a bounded stopping time \(S\) and 
define a measure \(Q\) as follows:
\[
Q (G) \triangleq E^P \big[ M^f_S \1_G \big], \quad G \in \mathcal{F}.
\]
Clearly, as \(M^f\) is a \(P\)-martingale with \(M^f_0 = 1\), \(Q\) is a probability measure by the optional stopping theorem. Moreover, because \(Q \sim P\), we have \(Q(X_0 = x_0) = 1\) and \(Q\)-a.s. \(Z_t \circ X < \infty\) for all \(t \in \mathbb{R}_+\), where \(Z\) is defined in \eqref{eq: int well-defined}.
As \(M^f, K^{g, n}\) and \(M^f K^{g, n}\) are \(P\)-martingales, we obtain for every bounded stopping time \(T\) that 
\begin{align*}
E^Q \big[ K^{g, n}_T \big] &= E^P \big[M^f_S K^{g, n}_T\big]
\\&=E^P \big[ M^f_S \1_{\{S \leq T\}} E^P\big[K^{g, n}_T | \mathcal{F}_{S \wedge T} \big] + K^{g, n}_T \1_{\{T < S\}} E^P\big[M^f_S | \mathcal{F}_{S \wedge T} \big]\big]
\\&=E^P \big[ M^f_SK^{g, n}_{S \wedge T}\1_{\{S \leq T\}} + K^{g, n}_T M^f_{S \wedge T}\1_{\{T < S\}}\big]
\\&=E^P \big[ M^f_{S \wedge T} K^{g, n}_{S \wedge T} \big] =
0. 
\end{align*}
Thus, because the stopping time \(T\) was arbitrary, \(K^{g, n}\) is a \(Q\)-martingale. Furthermore, since \(g\) was arbitrary and \(T_n \nearrow \infty\) as \(n \to \infty\), \Cref{lem: main2} and \cite[Remark 10.40]{J79} show that the push-forward \(Q \circ X^{-1}\) is a solution measure to the SDE \eqref{eq: mainSDE}. Consequently, by the weak uniqueness assumption, \(P \circ X^{-1} = Q \circ X^{-1}\). 

Next, take \(F \in \sigma (X_t, t \in \mathbb{R}_+)\) such that \(P (F) > 0\) and set 
\[
Q^* (G) \triangleq \frac{P(G \cap F)}{P(F)},\quad G \in \mathcal{F}.
\]
Using that \(P(F) = Q(F)\), we obtain that
\[
E^{Q^*} \big[ M^f_S \big] = \frac{Q(F) }{P(F)} =  1.
\]
Thus, as the stopping time \(S\) was arbitrary, \(M^f\) is a \(Q^*\)-martingale. 
Since \(f\) was arbitrary, we deduce from \Cref{lem: main3} that \(Q^* \circ V^{-1} = P \circ V^{-1}\) and consequently, for every \(G \in \sigma (V_t, t \in \mathbb{R}_+)\)
\begin{align*}
P(G \cap F) = Q^*(G) P(F) = P(G) P(F).
\end{align*}
As this equality holds trivially for all \(F \in \sigma (X_t, t \in \mathbb{R}_+)\) with \(P(F) = 0\), we conclude that \(V\) and \(X\) are independent. The proof is complete.
\qed

\bibliographystyle{plain}

\begin{thebibliography}{10}
	\bibitem{https://doi.org/10.1112/jlms/s2-26.2.335}
	M.~T.~Barlow.
	\newblock One dimensional stochastic differential equations with no strong solution.
	\newblock {\em Journal of the London Mathematical Society}, s2-26(2):335--347, 1982.
	
	\bibitem{SPS_2002__36__302_0}
	R.~F.~Bass.
	\newblock Stochastic differential equations driven by symmetric stable
	processes.
	\newblock {\em S\'eminaire de probabilit\'es de Strasbourg}, 36:302--313, 2002.
	
	\bibitem{bass2004}
	R.~F.~Bass.
	\newblock Stochastic differential equations with jumps.
	\newblock {\em Probability Surveys}, 1:1--19, 2004.
	
	\bibitem{BASS20041}
	R.~F.~Bass, K.~Burdzy, and Z.-Q.~Chen.
	\newblock Stochastic differential equations driven by stable processes for
	which pathwise uniqueness fails.
	\newblock {\em Stochastic Processes and their Applications}, 111(1):1 -- 15,
	2004.
	
	\bibitem{doi:10.1137/S0040585X97979093}
	A.~S.~Cherny.
	\newblock On the uniqueness in law and the pathwise uniqueness for stochastic
	differential equations.
	\newblock {\em Theory of Probability \& Its Applications}, 46(3):406--419,
	2002.
	
	\bibitem{doi:10.1080/17442509108833718}
	H.-J. Engelbert.
	\newblock {On the theorem of T. Yamada and S. Watanabe}.
	\newblock {\em Stochastics and Stochastic Reports}, 36(3-4):205--216, 1991.
	
	\bibitem{engelpeskir14}
	H.-J. Engelbert and G.~Peskir.
	\newblock {Stochastic differential equations for sticky Brownian motion}.
	\newblock {\em Stochastics}, 86(6):993--1021, 2014.
	
	\bibitem{MANA:MANA19911510111}
	H.-J. Engelbert and W.~Schmidt.
	\newblock {Strong Markov continuous local martingales and solutions of
		one-dimensional stochastic differential equations (part III)}.
	\newblock {\em Mathematische Nachrichten}, 151(1):149--197, 1991.
	
	\bibitem{EK}
	S.~N.~Ethier and T.~G.~Kurtz.
	\newblock {\em Markov Processes: Characterization and Convergence}.
	\newblock Wiley, 2005.
	
	\bibitem{J79}
	J.~Jacod.
	\newblock {\em Calcul stochastique et probl{\`e}mes de martingales}.
	\newblock Springer Berlin Heidelberg New York, 1979.
	
	\bibitem{J80}
	J.~Jacod.
	\newblock Weak and strong solutions of stochastic differential equations.
	\newblock {\em Stochastics}, 3(1-4):171--191, 1980.
	
	\bibitem{JS}
	J.~Jacod and A.~N.~Shiryaev.
	\newblock {\em Limit Theorems for Stochastic Processes}.
	\newblock Springer Berlin Heidelberg, 2nd edition, 2003.
	
	\bibitem{10.2307/3212423}
	J.~Jacod.
	\newblock {Two dependent Poisson processes whose sum is still a Poisson
		process}.
	\newblock {\em Journal of Applied Probability}, 12(1):170--172, 1975.
	
	\bibitem{Kabanov_1981}
	Y.~M.~Kabanov, R.~S.~Liptser, and A.~N.~Shiryaev.
	\newblock On the representation of integral-valued random measures and local
	martingales by means of random measures with deterministic compensators.
	\newblock {\em Mathematics of the {USSR}-Sbornik}, 39(2):267--280, 1981.
	
	\bibitem{kallsenshiryaev02}
	J.~Kallsen and A.~N.~Shiryaev.
	\newblock Time change representation of stochastic integrals.
	\newblock {\em Theory of Probability and its Applications}, 46(3):522--528, 2002.
	
	\bibitem{kuhn2018}
	F.~K{\"u}hn.
	\newblock {On martingale problems and Feller processes}.
	\newblock {\em Electronic Journal of Probability}, 23(13):1--18, 2018.
	
	\bibitem{kurtz10}
	T.~G.~Kurtz.
	\newblock Equivalence of stochastic equations and martingale problems.
	\newblock Stochastic Analysis 2010, Ed. D.~Crisan, Springer Berlin Heidelberg, 113--130, 2011.
	
	\bibitem{li2012}
	Z.~Li and F.~Pu.
	\newblock Strong solutions of jump-type stochastic equations.
	\newblock {\em Electronic Communications in Probability}, 17(33):1--13, 2012.
	
	\bibitem{doi:10.1142/S0219493710002991}
	H.~Qiao.
	\newblock {A theorem dual to Yamada--Watanabe theorem for stochastic evolution
		equations}.
	\newblock {\em Stochastics and Dynamics}, 10(03):367--374, 2010.
	
	\bibitem{rem20}
	M.~Rehmeier.
	\newblock {On Cherny’s results in infinite dimensions: a theorem dual to
		Yamada--Watanabe}.
	\newblock {\em Stochastics and Partial Differential Equations: Analysis and
		Computations}, 9:33--70, 2021.
	
	\bibitem{tappe20}
	S.~Tappe.
	\newblock {The dual Yamada--Watanabe theorem for mild solutions to stochastic
		partial differential equations}.
	\newblock {\em arXiv:2006.13038v1}, 2020.
	
	\bibitem{YW}
	T.~Yamada and S.~Watanabe.
	\newblock On the uniqueness of solutions of stochastic differential equations.
	\newblock {\em Journal of Mathematics of Kyoto University}, 11(1):155--167,
	1971.
	
	\bibitem{DYWTPP20}
	H.~Zhao, C.~Hu, and S.~Xu.
	\newblock {Equivalence of uniqueness in law and joint uniqueness in law for
		SDEs driven by Poisson processes}.
	\newblock {\em Applied Mathematics}, 7(8):784–792, 2016.
	
\end{thebibliography}

\end{document}